\journal{Mediterranean Journal of Mathematics}
\newtheorem{result}{Deep Result}
\newtheorem{lema}{Lemma}
\newtheorem{theorem}{Theorem}
\newtheorem{coro}{Corollary}
\newproof{pf}{Proof}
\DeclarePairedDelimiter\norm{\lVert}{\rVert}
\def\th@plain{%
  \thm@notefont{}
  \itshape 
}
\def\th@definition{%
  \thm@notefont{}
  \normalfont 
}
\DeclareFontFamily{U}{matha}{\hyphenchar\font45}
\DeclareFontShape{U}{matha}{m}{n}{ <5> <6> <7> <8> <9> <10> gen * matha <10.95> matha10 <12> <14.4> <17.28> <20.74> <24.88> matha12 }{}
\DeclareSymbolFont{matha}{U}{matha}{m}{n}
\DeclareFontFamily{U}{mathx}{\hyphenchar\font45}
\DeclareFontShape{U}{mathx}{m}{n}{ <5> <6> <7> <8> <9> <10> <10.95> <12> <14.4> <17.28> <20.74> <24.88> mathx10 }{}
\DeclareSymbolFont{mathx}{U}{mathx}{m}{n}
\DeclareMathSymbol{\obot} {2}{matha}{"6B}
\DeclareMathSymbol{\bigobot} {1}{mathx}{"CB}
\newcommand\mc{\mathcal}
\newcommand\mb{\mathbb}
\newcommand\mf{\mathbf}
\newcommand\ve{\varepsilon}
\newcommand\xt{\mathfrak{T}}
\DeclareMathOperator{\Span}{Span}
\DeclareMathOperator{\Ker}{Ker}
\DeclareMathOperator{\Ric}{Ric}
\DeclareMathOperator{\id}{Id}
\begin{document}

\begin{frontmatter}

\title{The Orthogonality Principle for Osserman Manifolds}


\author[matf]{Vladica Andreji\'c}
\ead{andrew@matf.bg.ac.rs}
\author[matf]{Katarina Luki\'c}
\ead{katarina.lukic@matf.bg.ac.rs}

\address[matf]{Faculty of Mathematics, University of Belgrade, Belgrade, Serbia}




\begin{abstract}
We introduce a new potential characterization of Osserman algebraic curvature tensors. 
An algebraic curvature tensor is Jacobi-orthogonal if $\mc{J}_XY\perp\mc{J}_YX$ holds for all $X\perp Y$,
where $\mc{J}$ denotes the Jacobi operator.
We prove that any Jacobi-orthogonal tensor is Osserman, while all known Osserman tensors are Jacobi-orthogonal.
\end{abstract}

\begin{keyword}
Osserman tensor\sep Osserman manifold\sep Jacobi operator\sep Jacobi-duality 
\MSC[2020] Primary 53B20; Secondary 53C25
\end{keyword}

\end{frontmatter}


\section{Introduction}

A connected Riemannian manifold is called two-point homogeneous if its isometry group is transitive on equidistant pairs of points.
These are the simplest and most beautiful Riemannian manifolds, with which we usually compare other spaces, and we call them the model spaces. 
The model spaces can be observed from plenty different points of view.
For example, it is well known (see Wolf \cite[Lemma 8.12.1]{Wo}) that a connected Riemannian manifold is isotropic (it has the geometry that does not depend on directions) if and only if it is two-point homogeneous.

Moreover, it is known (see Helgason \cite[p.535]{He}) that a model space is either flat or locally isometric to a rank one symmetric space. 
As a consequence of the classification, any model space is isometric to one of the following:
a Euclidean space; a sphere; a real, complex or quaternionic, projective or hyperbolic space;
or the Cayley projective or hyperbolic plane.
More precisely, the classification of these spaces includes:
$\mb{R}^n$, $\mf{S}^n$, $\mb{R}\mf{P}^n$, $\mb{C}\mf{P}^n$, $\mb{H}\mf{P}^n$, $\mb{O}\mf{P}^2$,
$\mb{R}\mf{H}^n$, $\mb{C}\mf{H}^n$, $\mb{H}\mf{H}^n$, and $\mb{O}\mf{H}^2$.
However, note that there are isomorphisms (isometries up to a homothety) in low dimensions:
$\mb{R}\mf{P}^1\cong \mf{S}^1$, $\mb{C}\mf{P}^1\cong \mf{S}^2$, $\mb{H}\mf{P}^1\cong \mf{S}^4$, $\mb{O}\mf{P}^1\cong \mf{S}^8$,
$\mb{C}\mf{H}^1\cong \mb{R}\mf{H}^2$, $\mb{H}\mf{H}^1\cong \mb{R}\mf{H}^4$, $\mb{O}\mf{H}^1\cong \mb{R}\mf{H}^8$.

Local isometries of a locally two-point homogeneous spaces act transitively on the sphere bundle of unit tangent vectors,
and therefore fix the characteristic polynomial of the Jacobi operator there.
Such Riemannian manifolds in which the characteristic polynomial (or equivalently, the eigenvalues and their multiplicities) 
of a Jacobi operator $\mc{J}_X$ is independent of $X$ from the unit tangent bundle are called (globally) Osserman manifolds.
One of the key questions in Riemannian geometry is whether the converse is true (every Osserman manifold is locally a model space),
and it is known as the Osserman conjecture (see Osserman \cite{O}). 
Let us remark that Nikolayevsky \cite{Ni4,Ni2,Ni1,Ni3} proved the conjecture in all cases, 
except the manifolds of dimension $16$ whose reduced Jacobi operator has an eigenvalue of multiplicity $7$ or $8$.

We are interested in algebraic properties related to algebraic curvature tensors that may be realized at a point of an Osserman manifold.
A well known such property is the Jacobi-duality, which has been shown to provide a characterization of Osserman tensors.
Recently, the first author introduced the Jacobi-proportionality, a property shared by all known Osserman tensors,
and provided some new insights into the Osserman conjecture \cite{A9}.  

Here we introduce a new feature that we call the Jacobi-orthogonality, where $\mc{J}_XY\perp\mc{J}_YX$ holds for all $X\perp Y$.
It turns out that tensors with such a property are Osserman (Corollary \ref{orto1c}), while it holds for all known Osserman tensors (Corollary \ref{orto23c}).

\section{Preliminaries}

Let $(\mc{V},g)$ be a (positive definite) scalar product space of dimension $n$ and
let $\ve_X=g(X,X)=\norm{X}^2$ be the squared norm of $X\in\mc{V}$.
A tensor $R\in\xt^0_4(\mc{V})$ is said to be an algebraic curvature tensor on $(\mc{V},g)$ 
if it satisfies the usual $\mb{Z}_2$ symmetries as well as the first Bianchi identity.
Raising the index we obtain an algebraic curvature operator $\mc{R}=R^\sharp\in\xt^1_3(\mc{V})$.
The Jacoby operator is a self-adjoint linear operator $\mc{J}_X\colon\mc{V}\to\mc{V}$ given by $\mc{J}_X Y=\mc{R}(Y,X)X$. 
Since $g(\mc{J}_XY,X)=0$ and $\mc{J}_XX=0$, the Jacobi operator $\mc{J}_X$ for any nonzero $X\in\mc{V}$ 
is completely determined by its restriction $\widetilde{\mc{J}}_X\colon X^\perp \to X^\perp$ called the reduced Jacobi operator.

Let $R$ be an algebraic curvature tensor on $(\mc{V},g)$.
We say that $R$ is Osserman if the characteristic polynomial $\widetilde{\omega}_X(\lambda)=\det(\lambda\id-\widetilde{\mc{J}}_X)$
of a reduced Jacobi operator $\widetilde{\mc{J}}_X$ is independent of unit $X\in\mc{V}$.
More generally, we say that $R$ is $k$-root if $\widetilde{\mc{J}}_X$ has exactly $k$ distinct eigenvalues for any nonzero $X\in\mc{V}$.

A Clifford family of rank $m$ is an anti-commutative family of skew-adjoint complex structures $J_i$ for $1\leq i\leq m$, 
which yields the Hurwitz relations, $J_i J_j+J_j J_i=-2\delta_{ij}\id$ for $1\leq i,j\leq m$.
We say that $R$ is Clifford if it has a form associated with a Clifford family,
\begin{equation}\label{clifford}
R=\mu_0 R^1+\sum_{i=1}^m \mu_i R^{J_i},
\end{equation}
for some $\mu_0,\dots,\mu_m\in\mb{R}$,
where any skew-adjoint endomorphism $J$ on $\mc{V}$ generates an algebraic curvature tensor $R^J$ defined by
\begin{equation*}
R^J(X,Y,Z,W) = g(JX,Z)g(JY,W) -g(JY,Z)g(JX,W) +2g(JX,Y)g(JZ,W),
\end{equation*}
for $X,Y,Z,W\in\mc{V}$, while $R^1$ is the algebraic curvature tensor of constant sectional curvature $1$.
It is well known that any Clifford $R$ is Osserman, while the converse is true in a huge number of cases,
which we see in the following deep result proved by Nikolayevsky \cite{Ni4,Ni2,Ni1,Ni3}.

\begin{result}\label{res2}
If an Osserman algebraic curvature tensor is not Clifford then it has dimension 16 such that 
the reduced Jacobi operator has an eigenvalue of multiplicity 7 or 8.
\end{result}

We are interested in various characterizations or generalizations of Osserman algebraic curvature tensors. 
One nice property of Osserman tensors is the so-called Raki\'c duality principle introduced by Raki\'c in 1999 \cite{Ra2}. 
According to the original definition we say that $R$ is Jacobi-dual if $\mc{J}_XY=\lambda Y$ 
for mutually orthogonal unit vectors $X,Y\in\mc{V}$ implies $\mc{J}_YX=\lambda X$.
In other words, the Jacobi-duality says that $Y$ is an eigenvector of $\mc{J}_X$ if and only if $X$ is an eigenvector of $\mc{J}_Y$.

Historically, Raki\'c first proved that any Osserman $R$ is Jacobi-dual (see \cite{Ra2}), 
while later some results related to the converse statement appeared (see \cite{A4} and \cite{BVM}).
We shall use the opportunity to show a new elementary and elegant proof that every Jacobi-dual $R$ is at least Einstein.

\begin{lema}
Any Jacobi-dual algebraic curvature tensor is Einstein.
\end{lema}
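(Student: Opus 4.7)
The plan is to prove the stronger pointwise statement that, for every unit $X\in\mc{V}$, the Ricci operator (the metric dual of $\Ric$) sends $X$ to a scalar multiple of itself. Since a self-adjoint endomorphism of $\mc{V}$ for which every vector is an eigenvector must be a scalar multiple of $\id$, this is enough to conclude $\Ric=c\,g$ and hence Einstein.

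The starting identity I will use is
\[
\Ric(X,Z)=\sum_{i=1}^{n}g(\mc{J}_{e_i}X,Z),
\]
valid for every orthonormal basis $\{e_i\}$ of $\mc{V}$; it is just the standard trace definition of Ricci rewritten through the Jacobi operators, using $R(X,e_i,e_i,Z)=g(\mc{J}_{e_i}X,Z)$. The key move is to tailor the basis to $X$: invoking the spectral theorem for the self-adjoint operator $\widetilde{\mc{J}}_X$ on $X^\perp$, I pick an orthonormal eigenbasis $Y_1,\ldots,Y_{n-1}$ with $\mc{J}_X Y_i=\lambda_i Y_i$ and then apply the identity with $\{e_i\}=\{X,Y_1,\ldots,Y_{n-1}\}$.

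The summand with $e_i=X$ vanishes because $\mc{J}_X X=0$. For each remaining summand, Jacobi-duality applied to the orthogonal unit pair $(X,Y_i)$ upgrades $\mc{J}_X Y_i=\lambda_i Y_i$ to $\mc{J}_{Y_i}X=\lambda_i X$, so the identity collapses to $\Ric(X,Z)=\bigl(\sum_i\lambda_i\bigr)g(X,Z)$. In particular $\Ric(X,Z)=0$ for every $Z\perp X$, which is exactly the claim that the Ricci operator sends $X$ to a multiple of itself, and the linear-algebra remark in the first paragraph finishes the argument. There is essentially no obstacle here: the only piece of craft is aligning the orthonormal basis used to compute the Ricci trace with the eigendecomposition of $\widetilde{\mc{J}}_X$, so that Jacobi-duality becomes applicable term by term and eliminates every off-diagonal contribution at once.
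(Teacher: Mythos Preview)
Your proof is correct and follows essentially the same route as the paper: choose an orthonormal eigenbasis of $\widetilde{\mc{J}}_X$ adapted to a given unit $X$, apply Jacobi-duality to each eigenvector to get $\mc{J}_{Y_i}X=\lambda_i X$, and sum to see that $\Ric^\sharp X$ is a scalar multiple of $X$, whence $\Ric^\sharp$ is a multiple of the identity. The only cosmetic difference is that the paper phrases the trace identity as the operator equation $\Ric^\sharp=\sum_i\mc{J}_{E_i}$ rather than working with $\Ric(X,Z)$, but the content is identical.
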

\begin{proof}
For an arbitrary unit $E_1\in\mc{V}$ there exists an orthonormal eigenbasis $(E_1,\dots,E_n)$ in $\mc{V}$ related to $\mc{J}_{E_1}$,
so there are $\lambda_1,\dots,\lambda_n\in\mb{R}$ that depend on $E_1$, such that $\mc{J}_{E_1}E_i=\lambda_i E_i$ holds for $1\leq i\leq n$.
If $R$ is Jacobi-dual then $\mc{J}_{E_i}E_1=\lambda_i E_1$ and considering the sharp of the Ricci tensor as a linear operator we have
$\Ric^{\sharp}=\mc{J}_{E_1}+\dots+\mc{J}_{E_n}$, and therefore $\Ric^{\sharp}(E_1)=(\lambda_1+\dots+\lambda_n)E_1$.
Hence, any unit $E_1\in\mc{V}$ is an eigenvector of $\Ric^{\sharp}$, so all eigenvalues of $\Ric^{\sharp}$ are the same, which gives
that $\lambda_1+\dots+\lambda_n$ is constant, and proves that $R$ is Einstein.
\end{proof}

Moreover, it turns out that the Jacobi-duality is a characterization of Osserman tensors, 
which was proved (using the perturbation theory) by Nikolayevsky and Raki\'c in 2013 \cite{NR1,NR2}.

\begin{result}\label{res1}
An algebraic curvature tensor is Osserman if and only if it is Jacobi-dual.
\end{result}

\section{Jacobi-orthogonal tensors}

In this work, we introduce a new concept of Jacobi-orthogonality. 
We say that an algebraic curvature tensor is Jacobi-orthogonal if $\mc{J}_XY\perp\mc{J}_YX$ holds for all $X\perp Y$.

\begin{theorem}\label{orto1}
Any Jacobi-orthogonal algebraic curvature tensor is Jacobi-dual.
\end{theorem}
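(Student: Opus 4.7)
The plan is to fix orthonormal $X, Y$ with $\mc{J}_X Y = \lambda Y$ and prove $\mc{J}_Y X = \lambda X$. I first pin down the easy components of $\mc{J}_Y X$: pair-swap symmetry gives $g(\mc{J}_Y X, X) = R(X,Y,Y,X) = g(\mc{J}_X Y, Y) = \lambda$, while $g(\mc{J}_Y X, Y) = R(X,Y,Y,Y) = 0$. Hence $\mc{J}_Y X = \lambda X + Z$ for some $Z$ orthogonal to both $X$ and $Y$, and the whole claim reduces to showing $Z = 0$.

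For this, I will apply the Jacobi-orthogonality hypothesis to the pair $(X+Y,\, X-Y)$, which is genuinely orthogonal since $g(X+Y, X-Y) = \ve_X - \ve_Y = 0$ for unit $X, Y$. Using bilinearity of $\mc{R}$ together with the identities $\mc{R}(X,Y)X = -\mc{J}_X Y$ and $\mc{R}(X,Y)Y = \mc{J}_Y X$, a short computation collapses both operators:
\begin{equation*}
\mc{J}_{X+Y}(X-Y) = 2\bigl(\mc{J}_Y X - \mc{J}_X Y\bigr) = 2\lambda(X-Y) + 2Z,
\end{equation*}
\begin{equation*}
\mc{J}_{X-Y}(X+Y) = 2\bigl(\mc{J}_X Y + \mc{J}_Y X\bigr) = 2\lambda(X+Y) + 2Z.
\end{equation*}
Taking the inner product, the $\lambda^2$ cross-term vanishes because $g(X-Y, X+Y) = 0$, the two $\lambda$ cross-terms vanish because $Z$ is orthogonal to both $X$ and $Y$, and the only surviving contribution is $4\,g(Z,Z) = 4\ve_Z$. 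Jacobi-orthogonality forces this to be zero, and positive-definiteness then yields $Z = 0$.

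The step requiring the most foresight is choosing the right test pair. Naive polarizations such as $(X, Y + tW)$ or $(X + tW, Y)$ yield polynomial identities in $t$ whose coefficients give only partial information about $Z$ (one can for instance extract $\mc{J}_X Z = 0$), and they produce no contradiction in the degenerate case $\lambda = 0$. The symmetric choice $(X+Y,\, X-Y)$ is special because both $\mc{J}_{X+Y}(X-Y)$ and $\mc{J}_{X-Y}(X+Y)$ contain an identical $2Z$ contribution, so $\ve_Z$ emerges as the sole surviving term regardless of the value of $\lambda$; this is the key observation that makes the argument work uniformly.
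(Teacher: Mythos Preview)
Your proof is correct and follows essentially the same route as the paper: write $\mc{J}_Y X = \lambda X + Z$ with $Z\in\Span\{X,Y\}^\perp$, apply Jacobi-orthogonality to the pair $(X+Y,\,X-Y)$, reduce both sides to $2(\mc{J}_Y X \mp \mc{J}_X Y)$, and read off $\ve_Z=0$. The paper packages the final step as the identity $\ve_{\mc{J}_X Y}=\ve_{\mc{J}_Y X}$ before substituting, while you expand the inner product directly, but this is only a cosmetic difference.
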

\begin{proof}
Let $X,Y\in\mc{V}$ be mutually orthogonal unit vectors such that $\mc{J}_XY=\lambda Y$ holds for some $\lambda\in\mb{R}$.
Since $g(\mc{J}_YX,X)=g(\mc{J}_XY,Y)=\lambda$, we have $\mc{J}_YX=\lambda X+Z$ for some $Z\in\Span\{X,Y\}^{\perp}$.
The Jacobi-orthogonality immediately gives $\mc{J}_XY \perp \mc{J}_YX$, while
$X+Y\perp X-Y$ implies $\mc{J}_{X+Y}(X-Y) \perp \mc{J}_{X-Y}(X+Y)$.
However,
\begin{equation*}
\begin{aligned}
\mc{J}_{X+Y}(X-Y) &=(2\mc{J}_X+2\mc{J}_Y-\mc{J}_{X-Y})(X-Y)=2(\mc{J}_YX-\mc{J}_XY)\\
\mc{J}_{X-Y}(X+Y) &=(2\mc{J}_X+2\mc{J}_Y-\mc{J}_{X+Y})(X+Y)=2(\mc{J}_YX+\mc{J}_XY), 
\end{aligned}
\end{equation*}
which yields $\ve_{\mc{J}_XY}=\ve_{\mc{J}_YX}$.
Hence, $\lambda^2=\lambda^2+\ve_Z$, so we obtain $Z=0$ and consequently $\mc{J}_YX=\lambda X$, which proves the Jacobi-duality.
\end{proof}

According to Deep Result \ref{res1} any Jacobi-dual $R$ is Osserman, which yields the following consequence.

\begin{coro}\label{orto1c}
Any Jacobi-orthogonal algebraic curvature tensor is Osserman.
\end{coro}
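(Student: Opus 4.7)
This corollary is an immediate consequence of two results already at hand, so my plan is simply to chain them. I would start from an arbitrary Jacobi-orthogonal algebraic curvature tensor $R$, apply Theorem \ref{orto1} to conclude that $R$ is Jacobi-dual, and then invoke Deep Result \ref{res1} (the Nikolayevsky--Raki\'c characterization) to conclude that $R$ is Osserman. The key observation is that the direction we just established, namely Jacobi-orthogonal $\Rightarrow$ Jacobi-dual, slots directly into the forward direction of the Jacobi-duality characterization of Osserman tensors, so no further algebraic work is required.

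No real obstacle is expected. The only thing worth noting is that we are using the nontrivial direction ``Jacobi-dual $\Rightarrow$ Osserman'' of Deep Result \ref{res1}, which relies on perturbation theory and is the deeper half of the Nikolayevsky--Raki\'c theorem; in particular we are not asked to reprove it here. One could in principle try to derive the Osserman property directly from the Jacobi-orthogonality hypothesis without first passing through Jacobi-duality, but this would essentially recover the hard implication of Deep Result \ref{res1} in this special case, and so offers no benefit over the two-line deduction above.
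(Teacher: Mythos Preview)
Your proposal is correct and matches the paper's own argument exactly: the paper derives the corollary by combining Theorem~\ref{orto1} (Jacobi-orthogonal $\Rightarrow$ Jacobi-dual) with Deep Result~\ref{res1} (Jacobi-dual $\Rightarrow$ Osserman). Your remark that the nontrivial direction of Deep Result~\ref{res1} is being used is also accurate.
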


Conversely, we want to show that any Osserman $R$ is Jacobi-orthogonal.
However, we manage to do this only in known cases.

\begin{theorem}\label{orto2}
Any Clifford algebraic curvature tensor is Jacobi-orthogonal.
\end{theorem}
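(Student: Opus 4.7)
The plan is to prove $g(\mc{J}_X Y, \mc{J}_Y X) = 0$ for all orthogonal $X, Y \in \mc{V}$ by direct computation, exploiting the bilinearity of the Jacobi operator in the curvature tensor together with the algebraic structure of the Clifford family. I would first record the Jacobi operators of the building blocks in (\ref{clifford}): from the standard constant-curvature identity one has $\mc{J}^1_X Y = \norm{X}^2 Y - g(X, Y) X$, and a short calculation using the defining expression for $R^{J_i}$ together with $g(J_i X, X) = 0$ yields $\mc{J}^{J_i}_X Y = 3\, g(J_i Y, X)\, J_i X$.

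Combining via (\ref{clifford}) and restricting to $X \perp Y$ gives
\begin{equation*}
\mc{J}_X Y = \mu_0 \norm{X}^2 Y + 3 \sum_{i=1}^m \mu_i\, g(J_i Y, X)\, J_i X,
\end{equation*}
with an analogous expression for $\mc{J}_Y X$. Expanding the inner product $g(\mc{J}_X Y, \mc{J}_Y X)$ then yields four groups of terms. The $\mu_0^2$ cross term carries the factor $g(X, Y) = 0$, while the two mixed $\mu_0 \mu_i$ terms each contain one of the factors $g(J_i X, X) = 0$ or $g(J_j Y, Y) = 0$ forced by the skew-adjointness of every $J_i$. Only the pure Clifford contribution
\begin{equation*}
9 \sum_{i, j} \mu_i \mu_j\, g(J_i Y, X)\, g(J_j X, Y)\, g(J_i X, J_j Y)
\end{equation*}
survives, and its diagonal part collapses because $J_i^2 = -\id$ forces $g(J_i X, J_i Y) = g(X, Y) = 0$.

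What remains is the genuine off-diagonal interaction between distinct Clifford generators, and this is the conceptually decisive step. Here I would pair the $(i, j)$ and $(j, i)$ contributions; using skew-adjointness to rewrite $g(J_i X, J_j Y) + g(J_j X, J_i Y) = -g(X, (J_i J_j + J_j J_i) Y)$, the Hurwitz relation $J_i J_j + J_j J_i = -2\delta_{ij} \id$ collapses this to $2\delta_{ij}\, g(X, Y) = 0$ for $i \neq j$. The entire off-diagonal sum therefore cancels, proving $\mc{J}_X Y \perp \mc{J}_Y X$. I expect the only nontrivial ingredient to be precisely this symmetrization step -- recognizing that individual products $J_i J_j$ are uncontrolled but that the anti-commutator is forced to vanish by the Hurwitz relations -- while everything else is routine bilinear bookkeeping on the two types of pieces in (\ref{clifford}).
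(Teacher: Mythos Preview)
Your argument is correct and follows essentially the same route as the paper: compute $\mc{J}_XY$ explicitly from \eqref{clifford}, observe that for $X\perp Y$ only the double Clifford sum survives in $g(\mc{J}_XY,\mc{J}_YX)$, and then symmetrize over $(i,j)$ so that the Hurwitz relation $J_iJ_j+J_jJ_i=-2\delta_{ij}\id$ collapses everything to a multiple of $g(X,Y)=0$. The only cosmetic difference is that you split off the diagonal $i=j$ contribution first and then symmetrize the off-diagonal remainder, whereas the paper applies the symmetrization to the whole double sum in one stroke; since the coefficient $\mu_i\mu_j\,g(J_iY,X)\,g(J_jX,Y)$ is already symmetric in $i,j$, both presentations amount to the same computation.
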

\begin{proof}
Suppose $R$ is Clifford of form \eqref{clifford}, and calculate the Jacobi operator, 
\begin{equation*}
\mc{J}_XY=\mu_0(\ve_XY-g(Y,X)X)-3\sum_{i=1}^m \mu_ig(Y,J_iX)J_iX.
\end{equation*}
For $X\perp Y$, we have
\begin{equation*}
\begin{aligned}
g(\mc{J}_XY,\mc{J}_YX)&=g\left(\mu_0\ve_X Y-3\sum_i \mu_i g(Y,J_iX)J_iX,\quad \mu_0\ve_Y X-3\sum_j \mu_j g(X,J_jY)J_jY\right)\\
&=9\sum_{i,j}\mu_i\mu_j g(Y,J_iX)g(X,J_jY)g(J_iX,J_jY)\\
&=9\sum_{i,j}\mu_i\mu_j g(Y,J_iX)g(Y,J_jX)g(Y, J_jJ_iX)\\
&=\frac{9}{2}\sum_{i,j}\mu_i\mu_j g(Y,J_iX)g(Y,J_jX)g(Y, (J_iJ_j+J_jJ_i)X)\\
&=-9\sum_{i}\mu_i^2 g(Y,J_iX)^2g(Y,X)=0,
\end{aligned}
\end{equation*}
which proves that $R$ is Jacobi-orthogonal.
\end{proof}

\begin{theorem}\label{orto3}
Any two-root Osserman algebraic curvature tensor is Jacobi-orthogonal.
\end{theorem}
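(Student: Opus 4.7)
The plan is to exploit the fact that, being Osserman, $R$ is Jacobi-dual by Deep Result~\ref{res1}, while the two-root hypothesis gives an orthogonal decomposition $X^\perp = E_\alpha(X)\oplus E_\beta(X)$ into the two eigenspaces of $\widetilde{\mc{J}}_X$ with distinct eigenvalues $\alpha,\beta$. Fix orthogonal $X,Y\in\mc{V}$ with $X$ unit, and decompose $Y = U+V$ with $U\in E_\alpha(X)$ and $V\in E_\beta(X)$; both summands automatically lie in $X^\perp$. The aim is to verify $g(\mc{J}_XY,\mc{J}_YX)=0$.

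One side of the pairing is immediate: $\mc{J}_XY = \alpha U+\beta V$. For the other, expand $\mc{J}_YX=\mc{R}(X,Y)Y$ bilinearly,
\begin{equation*}
\mc{J}_Y X = \mc{J}_U X + \mc{J}_V X + \mc{R}(X,U)V + \mc{R}(X,V)U,
\end{equation*}
then invoke Jacobi-duality for the unit eigenvectors $U/\norm{U}$ and $V/\norm{V}$, together with the quadratic scaling $\mc{J}_{cZ}=c^2\mc{J}_Z$, to conclude $\mc{J}_U X = \alpha\ve_U X$ and $\mc{J}_V X = \beta\ve_V X$. Since $U,V\perp X$, these $X$-proportional terms drop out of the pairing against $\alpha U+\beta V$, reducing the problem to
\begin{equation*}
g(\mc{J}_XY,\mc{J}_YX) = g\bigl(\alpha U+\beta V,\; \mc{R}(X,U)V+\mc{R}(X,V)U\bigr).
\end{equation*}

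It remains to check that the four resulting scalars vanish. Two of them, $g(U,\mc{R}(X,V)U) = R(X,V,U,U)$ and $g(V,\mc{R}(X,U)V) = R(X,U,V,V)$, vanish at once by antisymmetry of $R$ in its last pair. The remaining mixed pairings I would handle by antisymmetry of $R$ in its first pair together with $\mc{R}(U,X)U = -\mc{J}_U X$:
\begin{equation*}
g(U,\mc{R}(X,U)V) = g(\mc{R}(U,X)U, V) = -g(\mc{J}_U X, V) = -\alpha\ve_U g(X,V) = 0,
\end{equation*}
and symmetrically $g(V,\mc{R}(X,V)U) = 0$. The main obstacle I anticipate is precisely this last step --- reshuffling the curvature symmetries to re-express the mixed pairings as Jacobi-operator evaluations on $X$ --- but the two-root hypothesis keeps $Y$ split into only two summands, so the bookkeeping remains minimal.
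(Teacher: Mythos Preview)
Your argument is correct and is essentially the paper's own proof with the roles of $X$ and $Y$ interchanged: the paper decomposes $X$ along the eigenspaces of $\widetilde{\mc{J}}_Y$ and then reduces $g(\mc{J}_XY,\mc{J}_YX)$ to the two ``mixed'' curvature terms $g(\mc{J}_{X_1}Y,X_2)$ and $g(\mc{J}_{X_2}Y,X_1)$, which vanish by Jacobi-duality, exactly as your four-term analysis does. The only cosmetic gap is the degenerate case $U=0$ or $V=0$ (where you cannot normalize), but there $Y$ is already an eigenvector of $\mc{J}_X$ and duality gives $\mc{J}_YX\parallel X\perp\mc{J}_XY$ directly.
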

\begin{proof}
Suppose $R$ is two-root Osserman, so there exist two distinct eigenvalues $\lambda_1,\lambda_2\in\mb{R}$ such that for any nonzero $Y\in\mc{V}$
we have $\mc{V}=\Span\{Y\}\oplus \mc{V}_1(Y)\oplus \mc{V}_2(Y)$, where $\mc{V}_i(Y)=\Ker(\widetilde{\mc{J}}_Y-\ve_Y\lambda_i\id)$ for $i=1,2$.
Let us decompose $X\perp Y$ as $X=X_1+X_2$ with $X_1\in\mc{V}_1(Y)$ and $X_2\in\mc{V}_2(Y)$.
Since $\mc{J}_YX=\ve_Y(\lambda_1 X_1+\lambda_2 X_2)$, we have
\begin{equation*}
\begin{aligned}
g(\mc{J}_XY,\mc{J}_YX)&=\ve_Y R(Y,X_1+X_2,X_1+X_2,\lambda_1 X_1+\lambda_2 X_2)\\
&=\ve_Y(\lambda_2-\lambda_1) R(Y,X_1+X_2,X_1,X_2)=\ve_Y(\lambda_2-\lambda_1)(g(\mc{J}_{X_1}Y,X_2) -g(\mc{J}_{X_2}Y,X_1)).
\end{aligned}
\end{equation*}
Any Osserman $R$ is Jacobi-dual, which gives $g(\mc{J}_{X_1}Y,X_2)=g(\ve_{X_1}\lambda_1Y,X_2)=0$ and $g(\mc{J}_{X_2}Y,X_1)=0$,
so we obtain $\mc{J}_XY \perp \mc{J}_YX$, which proves that $R$ is Jacobi-orthogonal.
\end{proof}

According to Deep Result \ref{res2} any Osserman $R$ of dimension $n\neq 16$ 
is Clifford and therefore by Theorem \ref{orto2} it is Jacobi-orthogonal.
In dimension $16$ there exist Osserman curvature tensors that are not Clifford, 
which is the case with the model spaces $\mb{O}\mf{P}^2$ and $\mb{O}\mf{H}^2$.
However, all model spaces are one-root or two-root, and by Theorem \ref{orto3} they are Jacobi-orthogonal.

\begin{coro}\label{orto23c}
All known Osserman algebraic curvature tensors are Jacobi-orthogonal.
\end{coro}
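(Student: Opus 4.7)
The plan is to split all known Osserman algebraic curvature tensors into two classes according to whether or not they are of Clifford type, and then apply Theorems \ref{orto2} and \ref{orto3} respectively. Since Jacobi-orthogonality has already been shown to hold for every Clifford tensor, the work reduces to handling the non-Clifford known examples.

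First, if $R$ is a Clifford Osserman tensor, Theorem \ref{orto2} immediately gives that $R$ is Jacobi-orthogonal. Second, suppose $R$ is Osserman but not Clifford. By Deep Result \ref{res2}, the only possibility is that $R$ lives on a $16$-dimensional space and that its reduced Jacobi operator $\widetilde{\mc{J}}_X$ has an eigenvalue of multiplicity $7$ or $8$. Among known tensors, the non-Clifford Osserman examples are exhausted by the curvature tensors at a point of the Cayley projective plane $\mb{O}\mf{P}^2$ and the Cayley hyperbolic plane $\mb{O}\mf{H}^2$. These are rank one symmetric spaces, and classical computations of their sectional curvatures show that their reduced Jacobi operators take only two distinct eigenvalues, namely one with multiplicity $7$ and one with multiplicity $8$. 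In particular, the associated $R$ is two-root Osserman, so Theorem \ref{orto3} applies and yields Jacobi-orthogonality.

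The main obstacle I expect is the step that identifies the known non-Clifford Osserman tensors with the Cayley planes and verifies that these are two-root. This is not a matter of direct computation within the paper, but rather an appeal to the classification of two-point homogeneous Riemannian manifolds (recalled in the introduction) together with the standard description of the eigenvalue structure of the Jacobi operators of $\mb{O}\mf{P}^2$ and $\mb{O}\mf{H}^2$. Once that identification is invoked, both branches of the dichotomy are covered by earlier results, and the corollary follows.
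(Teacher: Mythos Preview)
Your proposal is correct and follows essentially the same route as the paper: split into the Clifford case (handled by Theorem~\ref{orto2}) and the known non-Clifford case in dimension~$16$, identified via Deep Result~\ref{res2} with the Cayley planes $\mb{O}\mf{P}^2$ and $\mb{O}\mf{H}^2$, which are two-root and hence covered by Theorem~\ref{orto3}. The paper phrases the second step slightly more broadly (``all model spaces are one-root or two-root''), but the content is the same.
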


We can conclude that the Jacobi-orthogonality is a very nice feature that would characterize Osserman tensors
in case that the Osserman conjecture is correct. The Jacobi-orthogonal condition brings us some interesting equations.

Let $R$ be an Osserman algebraic curvature tensor, and consequently $R$ is Jacobi-dual.
Let us fix an arbitrary unit $X\in\mc{V}$ and consider mutually orthogonal eigenvectors $A,B,C$ of ${\mc{J}}_X$ orthogonal to $X$.
There exist $\lambda_A,\lambda_B,\lambda_C\in\mb{R}$ such that $\mc{J}_XA=\lambda_AA$, $\mc{J}_XB=\lambda_BB$, and $\mc{J}_XC=\lambda_CC$, 
which yields $\mc{J}_AX=\ve_A\lambda_AX$, $\mc{J}_BX=\ve_B\lambda_BX$, and $\mc{J}_CX=\ve_C\lambda_CX$.
Hence $\mc{J}_X(A+B+C)=\lambda_AA+\lambda_BB+\lambda_CC$ and
\begin{equation*}
\begin{aligned}
\mc{J}_{A+B+C}X=& (\ve_A\lambda_A+\ve_B\lambda_B+\ve_C\lambda_C)X+ \mc{R}(X,A)B+\mc{R}(X,B)A\\
&+\mc{R}(X,A)C+\mc{R}(X,C)A+\mc{R}(X,B)C+\mc{R}(X,C)B,
\end{aligned}
\end{equation*}
while (since $R(X,A,B,A)=0$ and $R(X,B,A,B)=0$) we have 
\begin{equation*}
\mc{R}(X,A)B+\mc{R}(X,B)A=\frac{R(X,A,B,C)+R(X,B,A,C)}{\ve_C}C+D
\end{equation*}
where $D\in\Span\{X,A,B,C\}^{\perp}$, and therefore 
\begin{equation*}
\begin{aligned}
g(\mc{J}_X(A+B+C),\mc{J}_{A+B+C}X)=&\lambda_A (R(X,B,C,A)+R(X,C,B,A))+\lambda_B (R(X,A,C,B)+R(X,C,A,B))\\
&+\lambda_C (R(X,A,B,C)+R(X,B,A,C)). 
\end{aligned}
\end{equation*}
If we suppose that $R$ is Jacobi-orthogonal, then we obtain 
\begin{equation*}
R(X,A,B,C)(\lambda_C-\lambda_B) +R(X,B,A,C)(\lambda_C-\lambda_A) +R(X,C,A,B)(\lambda_B-\lambda_A)=0,
\end{equation*}
while the first Bianchi identity yields
\begin{equation*}
R(X,A,B,C)(\lambda_C-2\lambda_B+\lambda_A) +R(X,B,A,C)(\lambda_C+\lambda_B-2\lambda_A)=0.
\end{equation*}
The last formula characterizes the Jacobi-orthogonality condition for Osserman tensors, 
but unfortunately we do not see how to prove that any $k$-root Osserman $R$ is Jacobi-orthogonal for $k>2$.

\section*{Acknowledgements}

The authors was partially supported by the Ministry of Education, Science and Technological
Developments of the Republic of Serbia: grant number 451-03-68/2022-14/200104.

\end{document}